\newtheorem{theorem}{\bf Theorem}[section]
\newtheorem{proposition}[theorem]{\bf Proposition}
\par \noindent {\bf Example }}%
\par \noindent {\bf Remark }}%
\newenvironment{proof}%
    {\par \noindent {\bf Proof}}%
    {\par \indent}
\begin{document}
\title{{\bf Goodness-of-fit tests for weibull populations on the basis of records}}
\author{Mahdi Doostparast\footnote{
\textit{E-mail addresses:} doostparast@math.um.ac.ir}
\\
{\small {\it Department of Statistics, School of
Mathematical Sciences,}}\vspace{-0.2cm}\\ {\small {\it Ferdowsi University of Mashhad, P. O. Box 91775-1159,  Mashhad,\ Iran }}\\
}
\date{}
\maketitle

\begin{abstract}
Record is used to reduce the time and cost of running experiments (Doostparast and Balakrishnan, 2010).
It is important to check the adequacy
of models upon which inferences or actions are based (Lawless, 2003, Chapter 10, p. 465). In the area of goodness of fit based on record data, there are a few works. Smith (1988) proposed a form of residual for testing some parametric models. But in most cases, the variation
inherent in graphical summaries is substantial, even when the
data are generated by assumed model, and the eye can not always
determine whether features in a plot are within the bounds of
natural random variation. Consequently, formal hypothesis tests
are an important part of model checking (Lawless, 2003).\\
In this paper, Kolmogorov-Smirnov and  Cramer-von Mises type goodness of fit tests for record data are proposed. Also a new weighted goodness of fit test is suggested. A Monte-Carlo simulation study is conducted to derive the percentiles of the statistics proposed. Finally, some real data sets are given to investigate results obtained.
\end{abstract}
\vskip 4mm
 \noindent {\bf Key Words:} Cramer-von Mises Statistics; Exponential model; Goodness of fit test; Kolmogorov-Smirnov statistics; Likelihood ratio test; Record data; Weibull model.


\section{Introduction}\label{intro}
In reliability, we are concerned primarily with test data in
which lifetimes of items that fail during the course of the test
are recorded or with variables related in some way to item
lifetimes. If the actual lifetime of every item in the sample is
recorded, the data are  {\em complete} data. To obtain complete
data, it is necessary to continue the experiment until the last
item on test or in service has failed. In cases where even a few
items in the sample may have very long lifetimes, experiment can
go on for a very long period of time and, in fact, well beyond
the point at which the results may no longer be of any interest
or use.  In such situations, it may be desirable to terminate the
study prior to failure of all items under test. When observation
is discontinued prior to all items having failed, we obtain the
so-called {\em censored} data. There are a variety of forms of
censored data that arise in practice; See, for example,
Balakrishnan and Cohen (1991) and Cohen (1991).

A form of censored data that is often encountered in applications is the  so-called {\em record} data.
As pointed out by Gulati and Padgett (1995), often, in industrial testing,
meteorological data, and some other situations, measurements may be made
sequentially and only values smaller (or larger) than all previous ones are recorded. Such data may be represented by
$(\textbf{r,k}):=(r_1,k_1,r_2,k_2,\cdots,r_m,k_m)$, where $r_i$ is
the $i$-th record value meaning new minimum (or maximum) and $k_i$ is the number of
trials following the observation of $r_i$ that are needed to obtain a new
record value (or to exhaust the available observation). There are two sampling schemes for generating such a record-breaking data:
\begin{itemize}
\item ({\em Inverse sampling scheme})
Items are presented sequentially and sampling is terminated when the $m$-th minimum is observed.
In this case, the total number of items sampled is a random number, and $K_m$ is defined to be one for convenience;
\item ({\em Random sampling scheme}) A random sample $Y_1,\cdots,Y_n$ is examined sequentially
and successive minimum values are recorded.  In this setting, we
have $N^{(n)}$, the number of records obtained, to be random and,
given a value of $m$, we have in this case $\sum_{i=1}^m K_i=n$.
\end{itemize}

A random variable $X$ is said to
have an exponential distribution, denoted by $X\sim
Exp(\sigma)$, if its cumulative distribution function
(cdf) is
\begin{equation}\label{cdf:exp}
F(x;\sigma)=1-\exp\left\{-\left(\frac{x}{\sigma}\right)\right\},\
\ \ x\geq 0,\ \ \ \sigma>0,
\end{equation}
and the probability density function (pdf) is
\begin{equation}\label{pdf:exp}
f(x;\sigma)=\frac{1}{\sigma}\exp\left\{-\left(\frac{x}{\sigma}\right)\right\},\
\ \ x\geq 0,\ \ \ \sigma>0.
\end{equation}
The exponential distribution is commonly used in many applied problems. Such a exponential distribution is a natural model while studying a variable that can take on only positive values such as lifetime of units.
In some situations, the Weibull distribution is more suitable
than the exponential distributions
(Nelson, 1985). The Weibull cdf, denoted by
$W(\alpha,\sigma)$, is
\begin{equation}\label{cdf:weibull}
F(x;\alpha,\sigma)=1-\exp\left\{-\left(\frac{x}{\sigma}\right)^\alpha\right\},\ \ \ \alpha>0,\ \ \sigma>0,
\end{equation}
and hence with pdf
\begin{equation}\label{pdf:weibull}
f(x;\alpha,\sigma)=\frac{\alpha x^{\alpha-1}}{\sigma^\alpha}\exp\left\{-\left(\frac{x}{\sigma}\right)^\alpha\right\},\ \ \ \alpha>0,\ \ \sigma>0.
\end{equation}
The scale parameter $\sigma$ is called the characteristic life
because it is always 63.2-th percentile. It determines the spread
and has the same units as failure times, for example hours,
months, cycles, and so forth. Parameter $\alpha$ is a unitless
pure number and determines the shape of the distribution. For
$\alpha=1$, the Weibull distribution is the exponential
distribution. The Weibull distribution
appears very frequent in practical problems when we observe data
representing minimal values. For example, the life of a capacitor
is determined by shortest-lived portion of dielectric. For many parent populations with limited left tail, the
limit of the minimum of independent samples converges to a
Weibull distribution (Lawless, 2003).
Researchers often like to make parametric assumptions on the underlying distribution.
With this in mind, estimation of the mean of an exponential distribution based on record data has been treated by Samaniego and Whitaker (1986) and Doostparast (2009). Hoinkes and Padgett (1994) obtained the ML estimators from record-breaking data in this model.

As pointed out by Lawless (2003, Chapter 10, p. 465), it is important to check the adequacy
of models upon which inferences or actions are based. In the area of goodness of fit based on record data, there is a
lack of published literature. But, there are a few works in this direction. However, informal
methods of model checking emphasize graphical procedures such as
probability and residual plots, Smith (1988) proposed a form of residual for testing some parametric models. But in most cases, the variation
inherent in graphical summaries is substantial, even when the
data are generated by assumed model, and the eye can not always
determine whether features in a plot are within the bounds of
natural random variation. Consequently, formal hypothesis tests
are an important part of model checking.  \\

Motivated by this, the aim of this paper is to provide some methods for model checking on
the basis of records. Specifically, suppose that the record data $\{R_1,K_1,\cdots,R_m,K_m\}$ are
coming from a population with parent cdf $F(.)$. We consider testing
\begin{equation}\label{goodness:weibull:h0}
H_0:F(x)=1-\exp\left\{-\left(\frac{x}{\sigma}\right)^\alpha\right\},\ \ \ \forall\ \ x\in (0,+\infty).
\end{equation}
where $\alpha$ and $\sigma$ may be unknown positive constants. In other
word, is the weibull model adequate to fit the data? 
Therefore, the rest of this article is organized as follows. Since weibull model has a wide variety application, in Section 2, maximum likelihood estimate (MLE) of the unknown parameters in Weibull model are obtained. In Section 3, explicit expression for Kolmogorov-Smirnov (K-S) and Cramer-Misses (C-M) goodness of fit tests is derived and we proposed a new modified goodness of fit test which is more suitable than the K-S and C-M statistics for records. Critical values of these statistics are obtained by a simulation study. In Section 4, Exponential model is considered and goodness of fit test for exponential model against the alternative weibull model is obtained. Finally, some numerical examples are given to investigate results obtained.\\

\section{Fitting a Weibull model}
It can be shown that, the likelihood function for the two sampling
schemes is given by
\begin{equation}\label{likelihood}
L(\theta)\equiv \prod_{i=1}^m
f(r_i)\left\{1-F(r_i)\right\}^{k_i-1},\ \ \ 0<r_m<\cdots<r_2<r_1.
\end{equation}
Let us assume that the sequence $\{R_1,K_1,\cdots,R_m,K_m\}$ are
coming from $W(\alpha,\sigma)$-model. The corresponding likelihood
function under either random or inversely sampling is obtained as
\begin{equation}\label{like:weibull}
L(\theta)\equiv\frac{\alpha^m}{\sigma^{m\alpha}}\left\{\prod_{i=1}^m r_i
\right\}^{\alpha-1}\exp\left\{-\frac{1}{\sigma^\alpha}\sum_{i=1}^m
k_i r_i^\alpha\right\}.
\end{equation}
After taking logarithm, we have
\begin{equation}\label{log:like:weibull}
l(\theta)\equiv m\log
(\alpha)-m\alpha\log(\sigma)+(\alpha-1)\sum_{i=1}^m
\log(r_i)-\frac{1}{\sigma^\alpha}\sum_{i=1}^m k_i r_i^\alpha.
\end{equation}
Through this paper "log" denotes natural logarithm. One can
easily show that, the maximum of \eqref{log:like:weibull} for
$m\geq 2$, by taking derivatives, is obtained from solving the
equations
\begin{equation}\label{weibull:log:likelihood:derivative}
\sigma=\left\{\frac{1}{m}\sum_{i=1}^m k_i
r_i^\alpha\right\}^{1/\alpha},
\end{equation}
and
\begin{equation}\label{weibull:log:likelihood:derivative:zero:alpha}
h(\alpha)=\frac{1}{m}\sum_{i=1}^m \ln r_i,
\end{equation}
where
$$h(\alpha)=\frac{\sum_{i=1}^m k_i r_i^\alpha\ln r_i}{\sum_{i=1}^m k_i
r_i^\alpha}-\frac{1}{\alpha}.$$ The equation
\eqref{weibull:log:likelihood:derivative:zero:alpha} cannot be
solved explicitly and hence the MLEs must be found by numerical
methods. These equations is similar with equations (6.2) and
(6.3) of Lehmann and Casella (1998, Ch. 6, p. 468). Hence, one can show that these equations have a unique solution. 

\section{GOF for weibull model}\label{gof:weibull}
GOF tests can be based on the approaches of comparison of
parametric estimates with nonparametric counterparts. Two well
known examples are the Kolmogorov-Smirnov (K-S) and the Cramer-von Mises (C-M) statistics defined by
\begin{equation}\label{KS}
\hat{D}_n=\sup_{-\infty<x<+\infty} |\hat{F}(x)-F_0(x)|,
\end{equation}
and
\begin{equation}\label{CM}
\hat{W}_n^2=n\int_{-\infty}^{+\infty}\left\{\hat{F}(x)-F_0(x)\right\}^2 d F_0(x),
\end{equation}
respectively, where $F_0(x)$ is the hypothesized model while $\hat{F}(x)$ is
the corresponding nonparametric maximum likelihood estimation (NPMLE). On the basis of
record data, arising from a random sample with size $n$,
Samaniego and Whitaker (1988) obtained NPMLE of survival function
$\bar{F}(x):=1-F(x)$ as
\begin{equation}\label{npmle:record}
\hat{\bar{F}}(x)=\prod_{i:r_{(i)}\leq x}\frac{\sum_{j=i}^m
k_{(j)}-1}{\sum_{j=i}^m k_{(j)}},
\end{equation}
where $r_{(0)}\equiv 0$ and $r_{(1)}<r_{(2)}<\cdots<r_{(m)}$ are
the observed record values, ordered from smallest to largest and
$\{k_{(i)}\}$ are the induced order statistics corresponding to
the ordered record values $\{r_{(i)}\}$ or $k_{(i)}=k_{m-i+1}$, $i=1,2,\cdots, m$.
As mentioned by Samaniego and Whitaker (1988), NPMLE in
\eqref{npmle:record} will perform poorly when estimating the
right tail of the actual distribution, thus we suggest a new GOF
statistic as follows
\begin{equation}\label{DS}
DS_n=n\int_0^{+\infty}\left(\hat{\bar{F}}(x)-\bar{F}_0(x)\right)^2\frac{1}{F_0(x)}d
F_0(x).
\end{equation}
The basic idea for $DS_n$ is similar with
Anderson-Darling statistic and is to measure the distance between
$\hat{F}(x)$ and $F_0(x)$ in left tail region of
$F_n(x)$ better than C-M statistic in \eqref{CM}. One may notice
that, on the basis of record data, the statistics $D_n$, $W_n^2$
and $DS_n$ are modified so that the supreme and integral are over
the range $y\leq r_1$. Sufficiently large values of $D_n$,
$W_n^2$ or $DS_n$ provide evidence against the hypothesized model.
To calculate the test statistics, the following Proposition is
helpful.
\begin{proposition}
Let $R_1,K_1,\cdots,R_m,K_m$ be record data arising from a random
sample with size $n$. Then the statistics $D_n$, $W_n^2$ and
$DS_n$ are simplified as
\begin{eqnarray}
D_n&=&\max_{1\leq i\leq
n}\left\{\max\left\{\hat{\Phi}_1\cdots\hat{\Phi}_{i-1}-\hat{\bar{F}}_0(r_{(i)}),
\hat{\bar{F}}_0(r_{(i)})-\hat{\Phi}_1\cdots\hat{\Phi}_i\right\}\right\},\label{KS:simple}\\
W_n^2&=&\frac{n}{3}\sum_{i=1}^{m+1}
\left\{\left[\hat{\Phi}_1\cdots\hat{\Phi}_{i-1}-\hat{\bar{F}}_0(r_{(i)})\right]^3-\left[\hat{\Phi}_1\cdots\hat{\Phi}_{i-1}-\hat{\bar{F}}_0(r_{(i-1)})\right]^3\right\},\label{CM:simple}\\
\mbox{and}&&\nonumber\\
 DS_n&=&n\left[\sum_{i=1}^{m+1}
\left\{\left[\hat{\Phi}_1\cdots\hat{\Phi}_{i-1}-1\right]^2\ln \hat{F}_0(r_{(i)})-\left[\hat{\Phi}_1\cdots\hat{\Phi}_{i-1}-1\right]^2\ln \hat{F}_0(r_{(i-1)})\right\},\right.\nonumber\\
&&\left.+2\sum_{i=1}^{m+1}
\left\{\left[\hat{\Phi}_1\cdots\hat{\Phi}_{i-1}-1\right]
\hat{F}_0(r_{(i)})-\left[\hat{\Phi}_1\cdots\hat{\Phi}_{i-1}-1\right]
\hat{F}_0(r_{(i-1)})\right\}+\frac{1}{2}\right],\label{DS:simple}
\end{eqnarray}
respectively, where $r_{(0)}\equiv 0$, $x_{m+1}\equiv +\infty$
and for $i=1$, $\hat{\Phi}_1\cdots\hat{\Phi}_{i-1}=1$ and
\[\hat{\Phi}_i=\frac{\sum_{j=i}^m
k_{(j)}-1}{\sum_{j=i}^m k_{(j)}},\ \ \ 1\leq i\leq m.\]
\end{proposition}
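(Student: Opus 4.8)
The plan is to exploit the fact that the NPMLE survival function \eqref{npmle:record} is a piecewise-constant, right-continuous step function. First I would record the elementary observation that, for $x$ in the half-open interval $[r_{(i-1)},r_{(i)})$, exactly the records $r_{(1)},\dots,r_{(i-1)}$ lie at or below $x$, so that $\hat{\bar{F}}(x)=\hat{\Phi}_1\cdots\hat{\Phi}_{i-1}$ is constant there (the empty product giving $1$ on $[0,r_{(1)})$), and that $\hat{\bar{F}}$ jumps downward from $\hat{\Phi}_1\cdots\hat{\Phi}_{i-1}$ to $\hat{\Phi}_1\cdots\hat{\Phi}_{i}$ at each $x=r_{(i)}$. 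With the conventions $r_{(0)}\equiv 0$ and $x_{m+1}\equiv +\infty$, this partitions $[0,+\infty)$ into the $m+1$ intervals $[r_{(i-1)},r_{(i)})$, on each of which $\hat{\bar{F}}$ is a known constant while the fitted model $\bar{F}_0$ (i.e.\ $\bar F$ with the MLEs substituted) is continuous and strictly decreasing. Every one of the three statistics then decomposes as a sum of contributions from these intervals, and the whole proof reduces to evaluating each contribution.

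For $D_n$ I would argue as in the classical Kolmogorov--Smirnov reduction. On the interior of $[r_{(i-1)},r_{(i)})$ the function $\hat{\bar{F}}(x)-\bar{F}_0(x)=\hat{\Phi}_1\cdots\hat{\Phi}_{i-1}-\bar{F}_0(x)$ is monotone in $x$ (because $\bar{F}_0$ is), so $|\hat{F}-F_0|=|\hat{\bar{F}}-\bar{F}_0|$ can attain its supremum only at a jump point. Approaching the jump at $r_{(i)}$ from the left gives the deviation $\hat{\Phi}_1\cdots\hat{\Phi}_{i-1}-\hat{\bar{F}}_0(r_{(i)})$, while the post-jump value gives $\hat{\bar{F}}_0(r_{(i)})-\hat{\Phi}_1\cdots\hat{\Phi}_{i}$; taking the larger of the two and then maximizing over $i$ yields exactly \eqref{KS:simple}.

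For $W_n^2$ and $DS_n$ I would split the defining integrals \eqref{CM} and \eqref{DS} over the same intervals and substitute $t=F_0(x)$, so that on interval $i$ one has $\hat{\bar{F}}(x)-\bar{F}_0(x)=(\hat{\Phi}_1\cdots\hat{\Phi}_{i-1}-1)+t$ and the integrating measure becomes $dt$. For $W_n^2$ each piece is $\int((\hat{\Phi}_1\cdots\hat{\Phi}_{i-1}-1)+t)^2\,dt$, whose antiderivative $\tfrac13((\hat{\Phi}_1\cdots\hat{\Phi}_{i-1}-1)+t)^3$, rewritten through the identity $(\hat{\Phi}_1\cdots\hat{\Phi}_{i-1}-1)+F_0=\hat{\Phi}_1\cdots\hat{\Phi}_{i-1}-\bar{F}_0$, gives \eqref{CM:simple}. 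For $DS_n$ the weight $1/F_0$ turns the integrand into $((\hat{\Phi}_1\cdots\hat{\Phi}_{i-1}-1)+t)^2/t=(\hat{\Phi}_1\cdots\hat{\Phi}_{i-1}-1)^2/t+2(\hat{\Phi}_1\cdots\hat{\Phi}_{i-1}-1)+t$, and termwise integration produces the logarithmic and linear sums appearing in \eqref{DS:simple}; the remaining quadratic piece $\tfrac12[F_0(r_{(i)})^2-F_0(r_{(i-1)})^2]$ telescopes over $i$ to $\tfrac12(1-0)=\tfrac12$, which is precisely the lone constant in \eqref{DS:simple}.

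The step that needs the most care is the boundary bookkeeping rather than any hard estimate. One must use $\bar{F}_0(r_{(0)})=\bar{F}_0(0)=1$, $\bar{F}_0(x_{m+1})=0$ and $F_0(0)=0$, $F_0(x_{m+1})=1$, and in particular check that the apparently singular term $\ln\hat{F}_0(r_{(0)})=\ln 0$ occurring in \eqref{DS:simple} at $i=1$ is harmless: its coefficient $(\hat{\Phi}_1\cdots\hat{\Phi}_{0}-1)^2=(1-1)^2=0$, matching the fact that near $x=0$ the weighted integrand behaves like $F_0(x)\to 0$, so the weighted integral \eqref{DS} indeed converges. Confirming this vanishing, together with the monotonicity claim underlying the Kolmogorov--Smirnov reduction, is the only genuinely delicate part of the argument.
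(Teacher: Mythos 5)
Your proof is correct and follows essentially the same route as the paper: the paper's own argument likewise partitions $[0,+\infty)$ into the intervals determined by the ordered records, substitutes $u=F_0(y)$, and integrates the resulting polynomial in $u$ on each piece (it writes this out only for $W_n^2$, declaring \eqref{KS:simple} clear and \eqref{DS:simple} similar). Your extra bookkeeping --- the monotonicity argument locating the supremum of $|\hat{\bar{F}}-\bar{F}_0|$ at the jump points, the telescoping of the quadratic term to $\tfrac12$, and the observation that the coefficient of $\ln \hat F_0(r_{(0)})$ vanishes at $i=1$ --- merely supplies details the paper leaves implicit.
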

\begin{proof}
Proof of \eqref{KS:simple} is clear. For \eqref{CM:simple}, we
have
\begin{eqnarray*}
W_n^2&=&n\int_{0}^{+\infty}\left\{\hat{F}_n(y)-F_0(y)\right\}^2
dF_0(y)\\
&=&n\int_{0}^{+\infty}\left\{\hat{\bar{F}}_n(y)-\bar{F}_0(y)\right\}^2
dF_0(y)\\
&=&
n\sum_{i=1}^{m+1}\int_{r_{(i-1)}}^{r_{(i)}}\left\{\hat{\bar{F}}_n(y)-\bar{F}_0(y)\right\}^2
dF_0(y)\\
&=&
n\sum_{i=1}^{m+1}\int_{r_{(i-1)}}^{r_{(i)}}\left\{\hat{\Phi}_1\cdots\hat{\Phi}_{i-1}-\bar{F}_0(y)\right\}^2
dF_0(y)\\
&=&
n\sum_{i=1}^{m+1}\int_{F_0(r_{(i-1)})}^{F_0(r_{(i)})}\left\{\hat{\Phi}_1\cdots\hat{\Phi}_{i-1}-1+u\right\}^2
du\\
&=&
\frac{n}{3}\sum_{i=1}^{m+1}\left[\left\{\hat{\Phi}_1\cdots\hat{\Phi}_{i-1}-1+\hat{F}_0(r_{(i)})\right\}^3-
\left\{\hat{\Phi}_1\cdots\hat{\Phi}_{i-1}-1+\hat{F}_0(r_{(i-1)})\right\}^3\right].
\end{eqnarray*}
Similarly, one can show \eqref{DS:simple} and desired result
follows. \hfill{$\Box$}
\end{proof}

\begin{proposition}\label{indep:dist}
Assuming $H_0:F_0(y)=1-\exp\left\{-(x/\sigma)^\alpha\right\}$ is
true. Conditionally on $\{N^{(n)}\geq 2\}$, the distribution of
$D_n$, $W_n^2$ and $DS_n$, on the basis of record data do not
depend on $F_0(y)$.
\end{proposition}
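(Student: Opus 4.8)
The plan is to exhibit each statistic as a function only of quantities whose joint law is free of $(\alpha,\sigma)$, by exploiting the scale–power structure of the Weibull family together with the equivariance of the record MLEs. From the simplified forms in the preceding Proposition, each of $D_n$, $W_n^2$ and $DS_n$ is a deterministic function of the factors $\hat\Phi_1\cdots\hat\Phi_{i-1}$ — which depend on the data only through the induced counts $k_{(j)}$ — and of the fitted values $\hat F_0(r_{(i)})=1-\exp\{-(r_{(i)}/\hat\sigma)^{\hat\alpha}\}$. So it suffices to show that the joint distribution of $\{k_{(j)}\}$ together with $\{\hat F_0(r_{(i)})\}$ does not depend on $F_0$.

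The central observation is a transformation argument. If $(R^{0}_1,K_1,\dots,R^{0}_m,K_m)$ denotes record data from the standard member $W(1,1)=Exp(1)$, then the strictly increasing map $T(r)=\sigma\,r^{1/\alpha}$ carries it to record data from $W(\alpha,\sigma)$: indeed $P(\sigma\,(R^0)^{1/\alpha}\le x)=1-\exp\{-(x/\sigma)^\alpha\}$, and since $T$ is monotone it preserves the entire pattern of successive minima, so the counts $K_i$ (and the event $\{N^{(n)}\ge 2\}$) are left unchanged. Consequently the record data from $W(\alpha,\sigma)$ equals in distribution $(T(R^0_i),K_i)$, and in particular the counts $\{k_{(j)}\}$ are already distribution-free.

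Next I would verify that $\hat F_0(r_{(i)})$ is invariant under the group of data transformations $r\mapsto c\,r^{d}$ with $c,d>0$. Using the likelihood equations \eqref{weibull:log:likelihood:derivative}–\eqref{weibull:log:likelihood:derivative:zero:alpha}, a direct substitution shows the MLEs are equivariant: under $r_j\mapsto c\,r_j^{d}$ one gets $\hat\alpha\mapsto \hat\alpha/d$ and $\hat\sigma\mapsto c\,\hat\sigma^{\,d}$, because in the shape equation \eqref{weibull:log:likelihood:derivative:zero:alpha} the additive $\ln c$ terms cancel and a common factor $d$ scales out, while $\hat\sigma$ then follows from \eqref{weibull:log:likelihood:derivative}. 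Feeding this into the fitted argument gives
\[
\left(\frac{c\,r_{(i)}^{\,d}}{c\,\hat\sigma^{\,d}}\right)^{\hat\alpha/d}=\left(\frac{r_{(i)}}{\hat\sigma}\right)^{\hat\alpha},
\]
so $\hat F_0(r_{(i)})$ is invariant. Applying this with $c=\sigma$, $d=1/\alpha$ (i.e.\ $T(r)=\sigma r^{1/\alpha}$) shows that the fitted values computed from $W(\alpha,\sigma)$ records coincide with those computed from the standard $W(1,1)$ records.

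Combining the two steps, each statistic computed from $W(\alpha,\sigma)$ record data equals the same statistic computed from $W(1,1)$ record data; hence its distribution is the distribution under the standard member and does not involve $(\alpha,\sigma)$, that is, does not depend on $F_0$. The conditioning on $\{N^{(n)}\ge 2\}$ is exactly what guarantees $m\ge 2$, so that the MLEs solving \eqref{weibull:log:likelihood:derivative}–\eqref{weibull:log:likelihood:derivative:zero:alpha} exist and the equivariance computation is legitimate. The step I expect to be the main obstacle is the equivariance of the MLEs: one must argue that the root of the non-explicit equation \eqref{weibull:log:likelihood:derivative:zero:alpha} transforms correctly under $r\mapsto c\,r^d$, which rests on the uniqueness of that root asserted at the end of Section~2 — without uniqueness the map ``data $\mapsto$ MLE'' need not be well defined and the invariance of $\hat F_0(r_{(i)})$ could fail.
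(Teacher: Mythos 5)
Your proof is correct and follows essentially the same route as the paper: reduce to the standard member $W(1,1)$ via the power--scale transformation (you go from $W(1,1)$ to $W(\alpha,\sigma)$ by $r\mapsto\sigma r^{1/\alpha}$, the paper goes the other way by $r\mapsto (r/\sigma)^{\alpha}$), establish the equivariance $\hat\alpha\mapsto\hat\alpha/d$, $\hat\sigma\mapsto c\,\hat\sigma^{d}$ of the MLEs, and conclude that the fitted values $\hat F_0(r_{(i)})$ and hence the statistics are distribution-free. Your additional remarks --- that the monotone map preserves the counts $K_i$ and that uniqueness of the root of \eqref{weibull:log:likelihood:derivative:zero:alpha} is what makes the equivariance argument legitimate --- are points the paper uses implicitly, so no gap.
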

\begin{proof}
Suppose $\{N^{(n)}\geq 2\}$. Let $R_i'\stackrel{D}{\equiv}
(R_i/\sigma)^\alpha$. Thus, $R_1',K_1,\cdots,R_m',K_m$ are coming
from a random sample with common distribution function $W(1,1)$.
The ML estimates on the basis of $R_1',K_1,\cdots,R_m',K_m$,
denoted by $\hat{\alpha}'$ and $\hat{\sigma}'$, are obtained by
solving \eqref{weibull:log:likelihood:derivative} and
\eqref{weibull:log:likelihood:derivative:zero:alpha} replacing
$r_i$ with $r_i'$. One can easily verify that
$\hat{\alpha}=\alpha\hat{\alpha}'$. This implies that
\begin{eqnarray*}
\hat{\sigma}&=&\left\{\frac{1}{m}\sum_{i=1}^m K_i
R_i^{\hat{\alpha}} \right\}^{\frac{1}{\hat{\alpha}}}\\
&=&\left\{\frac{1}{m}\sum_{i=1}^m K_i
(\sigma R_i'^{1/\alpha})^{\hat{\alpha}} \right\}^{\frac{1}{\hat{\alpha}}}\\
&=&\sigma\left\{\frac{1}{m}\sum_{i=1}^m K_i
(R_i')^{\hat{\alpha}/\alpha} \right\}^{\frac{1}{\hat{\alpha}}}\\
&=&\sigma\left\{\frac{1}{m}\sum_{i=1}^m K_i (R_i')^{\hat{\alpha}'}
\right\}^{\frac{1}{\hat{\alpha}'\alpha}}\\
&=&\sigma\left\{\hat{\sigma}'\right\}^{1/\alpha}.
\end{eqnarray*}
Hence, the estimate of weibull distribution function is obtained
as
\begin{eqnarray}
\hat{F}_0(x;\alpha,\sigma)&=&F_0(x;\hat{\alpha},\hat{\sigma})\nonumber\\
&=&1-\exp\left\{-\left(\frac{x}{\hat{\sigma}}\right)^{\hat{\alpha}}\right\}\nonumber\\
&=&1-\exp\left\{-\left(\frac{x}{\sigma\left\{\hat{\sigma}'\right\}^{1/\alpha}}\right)^{\alpha\hat{\alpha}'}\right\}\nonumber\\
&=&1-\exp\left\{-\left(\left[\frac{x}{\sigma}\right]^\alpha\frac{1}{\hat{\sigma}'}\right)^{\hat{\alpha}'}\right\}\nonumber\\
&=&1-\exp\left\{-\left(\frac{y}{\hat{\sigma}'}\right)^{\hat{\alpha}'}\right\}\nonumber\\
&=&\hat{F}^\star(y;\alpha,\sigma).\label{equ:cdf}
\end{eqnarray}
Similarly to Liao and Shimokawa (1999), this equation indicates
that $\hat{F}_0(x;\alpha,\sigma)$ is independent of the "true
values" of the parameters $\alpha$ and $\sigma$. This implies that $D_n$, $W_n^2$ and $DS_n$ is not depend on the "true value" of $\alpha$ and $\sigma$ when the parameters are estimated by the MLEs. The desired result follows. \hfill{$\Box$}
\end{proof}

Proposition \ref{indep:dist} clarifies that the distribution of
$D_n$, $W_n^2$ and $DS_n$, on the basis of record data, can be calculated via simulation without loss of generality by using a weibull distribution with $\alpha=\sigma=1$. Let $D_{n,\gamma}$, $W^2_{n,\gamma}$ and $DS_{n,\gamma}$ denotes the $\gamma$-th quantile of the distribution of $D_n$, $W_n^2$ and $DS_n$, on the basis of record data, respectively. These tests rejects the null hypothesis $H_0:F(x)=1-\exp\left\{-(x/\sigma)^\alpha\right\}$ of size $\gamma$, if the used GOF statistic exceeds its corresponding $(1-\gamma)$-th quantile. Table \ref{tab:gof:weibull} presents simulated critical values provided by a Monte-Carlo method. For this task, MC simulation provides the total sets of $M=100,000$ record samples and the values of $D_n$, $W_n^2$ and $DS_n$ are calculated and increasingly ordered. Then the critical values of $D_n$, $W_n^2$ and $DS_n$ for some significant level were calculated.
\begin{table}
\caption{Percentiles of $D_n$, $W_n^2$ and $DS_n$ for GOF of weibull model.} \label{tab:gof:weibull}
{\tiny
\begin{tabular}{cc|ccccccccc}\hline
$n$&&&&&$\gamma$&&\\\hline
&&0.01&0.025&0.05&0.1&0.5&0.90&0.95&0.975&0.99\\
&$D_n$&0.1758&0.2008&0.2253&0.2584&0.4445&0.8093&0.8627&0.8846&0.8976\\
5&$W_n^2$&0.0108&0.0166&0.0252&0.0414&0.2749&0.8842&1.0706&1.1545&1.2063\\
&$DS_n$&0.3819&0.4546&0.4963&0.5499&1.0480&2.6889&3.8012&4.4511&4.9176\\
&&&&&&&&&\\
&$D_n$&0.1508&0.1646&0.1877&0.2372&0.5296&0.8854&0.9170&0.9361&0.9494\\
10&$W_n^2$&0.0786&0.1354&0.2124&0.3524&0.9664&2.3140&2.5707&2.7348&2.8530\\
&$DS_n$&0.9747&1.0608&1.1891&1.4090&2.4504&8.9519&11.5462&13.7890&15.8577\\
&&&&&&&&&\\
&$D_n$&0.0858&0.1047&0.1394&0.2109&0.6430&0.9322&0.9502&0.9611&0.9704\\
20&$W_n^2$&0.7155&0.9951&1.1811&1.3369&3.1507&5.4022&5.7196&5.9185&6.0919\\
&$DS_n$&2.7572&3.1844&3.5657&4.0448&8.1438&26.5699&31.9874&36.4863&41.5800\\
&&&&&&&&&\\
&$D_n$&0.0451&0.0676&0.1067&0.1863&0.7763&0.9663&0.9743&0.9797&0.9846\\
50&$W_n^2$&3.4983&3.6557&3.7911&3.9573&11.4929&15.0385&15.4166&15.6718&15.9096\\
&$DS_n$&10.9018&11.6437&12.3496&13.4346&38.7320&98.0011&110.7805&121.8708&135.2813\\

\hline
\end{tabular}
}
\centering
\end{table}
\section{GOF for exponential model}\label{gof:expo}
As mentioned earlier, the model $W(\alpha,\sigma)$ reduces to
$Exp(\sigma)$ model when $\alpha=1$. Therefore, in this case,
testing the hypothesis $H_0:X\sim Exp(\sigma)$ against the
alternative $H_1:X\sim W(\alpha,\sigma)$ is equivalent to testing
$H_0:\alpha=1$ against the alternative $H_1:\alpha\neq 1$. We
could not find a UMP test of size $\gamma$ ($0<\gamma<1$) for
this hypothesis testing problem. We leave it as an open problem.
Therefore, we used the generalized likelihood ratio (GLR)
procedure in order to test these hypotheses. From
\eqref{cdf:weibull}, \eqref{pdf:weibull} and \eqref{like:weibull},
likelihood ratio statistic for testing $H_0:\alpha=1$ against the
alternative $H_1:\alpha\neq 1$ is given by
\begin{eqnarray}
\Lambda&=&\frac{\sup_{H_0} L}{\sup_{H_1}
L}\nonumber\\
&=&\frac{1}{\hat{\sigma}_{0}^m}\exp\left\{-\frac{1}{\hat{\sigma}_{0}}\sum_{i=1}^m
k_i r_i
\right\}\left(\frac{\hat{\alpha}^m}{\hat{\sigma}^{m\hat{\alpha}}}\left\{\prod_{i=1}^m
r_i
\right\}^{\hat{\alpha}-1}\exp\left\{-\frac{1}{\hat{\sigma}^\alpha}\sum_{i=1}^m
k_i
r_i^{\hat{\alpha}}\right\}\right)^{-1}\nonumber\\
&=&\left(\frac{m}{\sum_{i=1}^{m}k_i
r_i}\right)^m\exp\{-m\}\left(\frac{\hat{\alpha}^m}{\hat{\sigma}^{m\hat{\alpha}}}\left\{\prod_{i=1}^m
r_i \right\}^{\hat{\alpha}-1}\exp\left\{-m\right\}\right)^{-1}\nonumber\\
&=&\left(\frac{m}{\sum_{i=1}^{m}k_i
r_i}\right)^m\left(\frac{\hat{\alpha}^m}{\hat{\sigma}^{m\hat{\alpha}}}\left\{\prod_{i=1}^m
r_i \right\}^{\hat{\alpha}-1}\right)^{-1}\nonumber\\
&=&\left(\frac{m}{\sum_{i=1}^{m}k_i
r_i}\right)^m\hat{\sigma}^{m\hat{\alpha}}\left(\hat{\alpha}^m\left\{\prod_{i=1}^m
r_i \right\}^{\hat{\alpha}-1}\right)^{-1}\nonumber\\
&=&\left(\frac{\sum_{i=1}^{m}k_i
r_i^{\hat{\alpha}}}{\sum_{i=1}^{m}k_i
r_i}\right)^m\left(\hat{\alpha}^m\left\{\prod_{i=1}^m r_i
\right\}^{\hat{\alpha}-1}\right)^{-1}
\end{eqnarray}
where $\hat{\alpha}$ is obtained by solving equation
\eqref{weibull:log:likelihood:derivative:zero:alpha} and is  the
maximum likelihood estimation of $\alpha$ under $H_1$ while
$\hat{\sigma}_0$ is the ML estimate of $\sigma$ under $H_0$ and
is given by $\sum_{i=1}^m K_i R_i/n$.
\begin{proposition} When $\sigma$ is unknown,
critical region of the GLR test of level $\gamma$ for testing
$H_0:\alpha=1$ against the alternative $H_1:\alpha\neq 1$ is given
by
\begin{equation}\label{weibull:goodness:expansion:critical:glr}
C=\left\{({\bf r,k}):  \left(\frac{\sum_{i=1}^{m}k_i
r_i^{\hat{\alpha}}}{\sum_{i=1}^{m}k_i
r_i}\right)^m\left(\hat{\alpha}^m\left\{\prod_{i=1}^m r_i
\right\}^{\hat{\alpha}-1}\right)^{-1} < C^\star \right\},
\end{equation}
$\hat{\alpha}$ is the maximum likelihood estimation of $\alpha$
under $H_1$ and $C^\star$ is obtained from the size restriction
\begin{equation}\label{cstar:bothunknown:theta:glr}
\gamma=P_{\alpha=1}\left(\left(\frac{\sum_{i=1}^{m}k_i
r_i^{\hat{\alpha}}}{\sum_{i=1}^{m}k_i
r_i}\right)^m\left(\hat{\alpha}^m\left\{\prod_{i=1}^m r_i
\right\}^{\hat{\alpha}-1}\right)^{-1} < C^\star\right).
\end{equation}
\end{proposition}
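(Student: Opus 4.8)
The plan is to obtain the critical region directly from the general theory of generalized likelihood ratio tests, and then to verify that the resulting level-$\gamma$ cutoff $C^\star$ is well-defined despite $\sigma$ being a nuisance parameter. Recall that a GLR test rejects $H_0$ when $\Lambda = \sup_{H_0} L / \sup_{H_1} L$ is small: since $H_0$ is the submodel $\alpha = 1$ of $H_1$ one has $0 < \Lambda \le 1$, and small values of $\Lambda$ signal that the unrestricted Weibull model fits the records substantially better than the exponential. The explicit form of $\Lambda$ has already been computed in the display preceding the statement, so substituting it into the event $\{\Lambda < C^\star\}$ reproduces the critical region $C$ verbatim, and imposing the size condition $P_{H_0}(\Lambda < C^\star) = \gamma$ — evaluated under $\alpha = 1$ since that is what $H_0$ asserts — yields the stated equation for $C^\star$.

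The substantive point, and the only step requiring real justification, is that the null distribution of $\Lambda$, and hence $C^\star$, does not depend on the unknown scale $\sigma$; otherwise the size equation would not pin down a single constant. I would settle this by a scale-invariance argument in the spirit of Proposition \ref{indep:dist}. Rescaling the records by $r_i \mapsto c\,r_i$ with $c > 0$, one first checks that $\hat{\alpha}$ is unchanged: in \eqref{weibull:log:likelihood:derivative:zero:alpha} both $h(\alpha)$ and the right-hand side $\frac{1}{m}\sum_{i=1}^m \ln r_i$ shift by the same amount $\ln c$, so the defining equation for $\hat\alpha$ is invariant. Granting this, the ratio $\sum_{i=1}^m k_i r_i^{\hat{\alpha}} / \sum_{i=1}^m k_i r_i$ scales by $c^{\hat{\alpha}-1}$, its $m$-th power contributes a factor $c^{m(\hat{\alpha}-1)}$, and the term $\{\prod_{i=1}^m r_i\}^{\hat{\alpha}-1}$ in the denominator contributes the identical factor; the two cancel and $\Lambda$ is seen to be scale-invariant.

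Consequently, under $H_0$ one may pass from $R_i$ to $R_i/\sigma$, which forms a record sample from $Exp(1)$, without altering the value of $\Lambda$. The null distribution of $\Lambda$ therefore coincides with its distribution under $\alpha = \sigma = 1$ and is free of $\sigma$, so $C^\star$ is a genuine constant determined by the size equation alone, exactly as claimed. I expect the scale-invariance verification to be the main obstacle; once the explicit expression for $\Lambda$ from the preceding display is in hand, everything else is routine bookkeeping.
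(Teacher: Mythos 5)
Your proposal is correct, and it in fact goes further than the paper, which states this proposition without any proof at all: the critical region is taken there as an immediate consequence of the GLR construction applied to the expression for $\Lambda$ derived just above the statement. Your first paragraph reproduces exactly that implicit reasoning. The genuinely valuable addition is your second and third paragraphs: the verification that $\hat{\alpha}$ is invariant under $r_i \mapsto c\,r_i$ (both sides of \eqref{weibull:log:likelihood:derivative:zero:alpha} shift by $\ln c$) and that the two factors of $c^{m(\hat{\alpha}-1)}$ cancel in $\Lambda$, so that the null distribution of $\Lambda$ is free of the nuisance parameter $\sigma$ and the size equation \eqref{cstar:bothunknown:theta:glr} determines a single constant $C^\star$. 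Without this step the phrase ``level $\gamma$'' is not justified, so your argument fills a real gap rather than merely restating the paper; the only minor point left implicit in both treatments is the conditioning on $m\geq 2$ needed for $\hat{\alpha}$ to exist.
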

Under $H_0$, it can be shown that
$-2\ln \Lambda$ has  an asymptotic chi-square distribution with one
degree of freedom when $n$, sample size, goes to infinity, thus $C^\star\approx \exp\left\{-\frac{1}{2}\chi_{1,1-\gamma}\right\}$,
where $\chi_{v,p}$ is the $p$-th quantile of a chi-square
distribution with $v$ degrees of freedom.
\section{Illustrative examples}
\subsection*{Example 1}
Table \ref{telephone:data} shows the times between 48 (in
minutes) consecutive telephone calls to a company's switchboard,
as presented by Castillo {\em et. al.} (2005).
\begin{table}
\caption{Times (in minutes) between 48 consecutive calls.}
\label{telephone:data}       
\begin{tabular}{cccccccc}
\hline
1.34&0.14&0.33&1.68&1.86&1.31&0.83&0.33\\
2.20&0.62&3.20&1.38&0.96&0.28&0.44&0.59\\
0.25&0.51&1.61&1.85&0.47&0.41&1.46&0.09\\
2.18&0.07&0.02&0.64&0.28&0.68&1.07&3.25\\
0.59&2.39&0.27&0.34&2.18&0.41&1.08&0.57\\
0.35&0.69&0.25&0.57&1.90&0.56&0.09&0.28\\
\hline
\end{tabular}
\centering
\end{table}
Assuming that the times between the consecutive telephone calls
follow the exponential distribution $Exp(\sigma)$,
Castillo {et. al.} (2005) obtained the MLE of $\sigma$ based on
the complete data as $\hat{\sigma}_C=0.934$.  The corresponding
record data, obtained from these complete data, are presented in
Table \ref{castillo:data:record}.
\begin{table}
\caption{Record data arising from times (in minutes) between 48
consecutive calls.}
\label{castillo:data:record}       
\begin{tabular}{cccccc}
\hline $i$& 1&2&3&4&5\\
$R_i$&1.34&0.14&0.09&0.07&0.02\\
$K_i$&1&22&2&1&22 \\
\hline
\end{tabular}
\centering
\end{table}
By assuming $Exp(\sigma)$-model, the MLE of $\sigma$ on the basis of
record data is obtained to be $\hat{\sigma}_{0}=1.022$ while by assuming $W(\alpha,\sigma)$-model, from \eqref{weibull:log:likelihood:derivative} and \eqref{weibull:log:likelihood:derivative:zero:alpha},  MLEs of $\alpha$ and $\sigma$ is obtained as $\hat{\alpha}=1.1815$ and $\hat{\sigma}=0.8181$,
respectively. To calculate the GOF statistics, Table \ref{gof:statistics:castillo:data:record} is useful.
\begin{table}
\caption{GOF from times between 48 consecutive calls.}
\label{gof:statistics:castillo:data:record}       
\begin{tabular}{cccccclc}
\hline $i$& $r_i$ & $k_i$&$r_{(i)}$& $k_{(i)}$& $\hat{\Phi}_i$ & $\bar{F}_n(r_{(i)})=\hat{\Phi}_1\cdots\hat{\Phi}_i$ &$\hat{\bar{F}}_0(r_{(i)})=\exp\{-(r_{(i)}/\hat{\sigma})^{\hat{\alpha}}\}$\\\hline
1&1.34&1 &0.02&22&$\frac{48-1}{48}$&$\frac{47}{48}=0.9792$&0.9876\\
2&0.14&22&0.07&1 &$\frac{26-1}{26}$&$\frac{47}{48}\times \frac{25}{26}=0.9415$&0.9467\\
3&0.09&2 &0.09&2 &$\frac{25-1}{25}$&$\frac{47}{48}\times \frac{25}{26}\times \frac{24}{25}=0.9038$&0.9290\\
4&0.07&1 &0.14&22&$\frac{23-1}{23}$&$\frac{47}{48}\times \frac{25}{26}\times \frac{24}{25}\times\frac{22}{23}=0.8646$&0.8832\\
5&0.02&22&1.34&1 &$\frac{1-1}{1}$&0&0.1667\\
\hline
\end{tabular}
\centering
\end{table}
From Table \ref{gof:statistics:castillo:data:record}, we conclude that
\[D_n=0.6979,\ \ W_n^2=,5.5140\ \ DS_n=8.8604\]
Letting $\gamma=0.05$, from Table \ref{tab:gof:weibull}, three
approaches lead to accept Weibull model for this data. For
testing exponential model against the alternative Weibull model,
GLR statistics is obtained as
\[\Lambda=\left(\frac{\sum_{i=1}^{m}k_i
r_i^{\hat{\alpha}}}{\sum_{i=1}^{m}k_i
r_i}\right)^m\left(\hat{\alpha}^m\left\{\prod_{i=1}^m r_i
\right\}^{\hat{\alpha}-1}\right)^{-1}=1.4765,\]
or, $-2\ln\Lambda=0.3896630654$ which gives the $p-value=0.5324766591$. This supports exponential assumption  by Castillo {et. al.} (2005).
A graph of likelihood function is given in Figure \ref{fig:castillo:likelihood}.
\begin{figure}
\centering
\includegraphics[angle=0,scale=0.40]{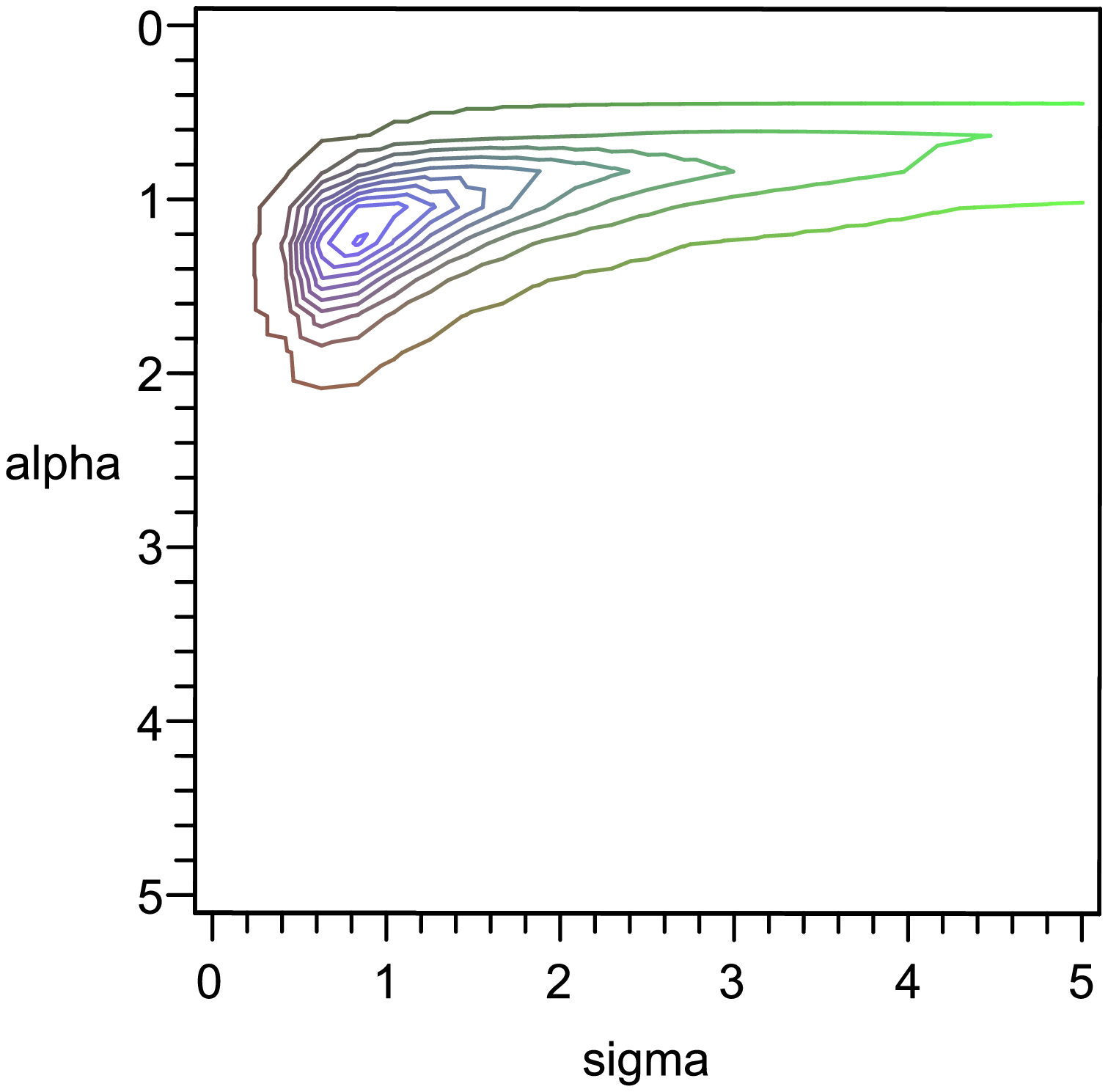}
\caption{Contour  plot of likelihood function \eqref{like:weibull} on the basis of data in Table \ref{castillo:data:record}.}\label{fig:castillo:likelihood}
\end{figure}
\subsection*{Example 2}
Samaniego and Whitaker (1986) presented record data arising from successive failure times of air conditioning units in Boeing aircraft on plan 7914 consists of $n=24$ failure times. The data is given in Table \ref{samaniego:1986:data:record}.
\begin{table}
\caption{Successive minima, plane 7914.}\label{samaniego:1986:data:record}
\begin{tabular}{ccccc}
\hline $i$& 1&2&3&4\\
$R_i$&50&44&22&3\\
$K_i$&1&3&2&18\\
\hline
\end{tabular}
\centering
\end{table}
They approximated these data by $Exp(\sigma)$-model and estimated the mean life $\sigma$ as $\hat{\sigma}_{0}=70$. Under $W(\alpha,\sigma)$-model, the MLEs of $\alpha$ and $\sigma$ are obtained as
\[\hat{\alpha}=1.598743046,\ \ \ \hat{\sigma}=51.42746441,\]
respectively.
Therefore, $-2\ln\Lambda=1.580279376$ which gives the $p-value=0.2087204561$. This supports exponential assumption  by Samaniego and Whitaker (1986).
A graph of likelihood function is given in Figure \ref{fig:samaniego:1986:likelihood}.
\begin{figure}
\centering
\includegraphics[angle=0,scale=0.5]{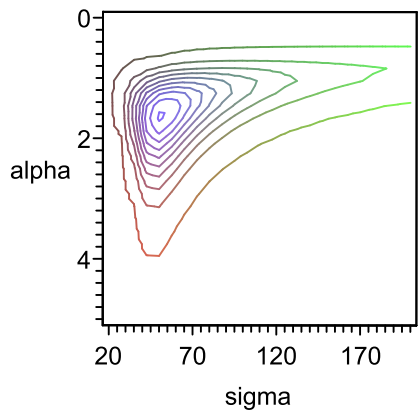}
\caption{Contour  plot of likelihood function \eqref{like:weibull} on the basis of data in Table \ref{samaniego:1986:data:record}}\label{fig:samaniego:1986:likelihood}
\end{figure}

\subsection*{Example 3}
Samaniego and Whitaker (1988) simulated a random sample with size $n=30$ from $W(\alpha=4,\sigma=1)$-model and  record data arising from this sample is presented in Table \ref{samaniego:1988:data:record}.
\begin{table}
\caption{Simulated record data from $W(\alpha=4,\sigma=1)$.}\label{samaniego:1988:data:record}
\begin{tabular}{ccccc}
\hline $i$& 1&2&3&4\\
$R_i$&0.879&0.765&0.735&0.220\\
$K_i$&3&2&2&23\\
\hline
\end{tabular}
\centering
\end{table}
Assuming $Exp(\sigma)$-model, MLE of the mean life $\sigma$ is $\hat{\sigma}_{0}=2.67425000$. By assuming $W(\alpha,\sigma)$-model, the MLEs of $\alpha$ and $\sigma$ are obtained as
\[\hat{\alpha}=3.316071956,\ \ \ \hat{\sigma}=0.9728468503,\]
respectively.
Therefore, $-2\ln\Lambda=7.911804336$ which gives the $p-value=0.0049113232$. This supports departure from exponential assumption. A graph of likelihood function is given in Figure \ref{fig:samaniego:1988:likelihood}.
\begin{figure}
\centering
\includegraphics[angle=0,scale=0.5]{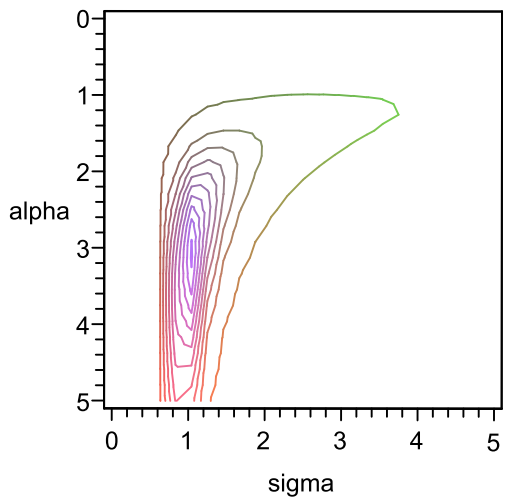}
\caption{Contour  plot of likelihood function \eqref{like:weibull} on the basis of data in Table \ref{samaniego:1988:data:record}}\label{fig:samaniego:1988:likelihood}
\end{figure}

\section{Concluding Remarks}
In this paper, Kolmogorov-Smirnov and  Cramer-von Misses type goodness of fit tests as well as a new weighted statistics for record data were proposed. These statistics were used to goodness of fit test for Weibull model. 
We suggest the following discipline to analyze record data:
First step is to test weibull model using the proposed GOF tests in Section \ref{gof:weibull}. Were it accepted, GLR test in Section \ref{gof:expo} for the exponentially model. Use the statistical procedures for record data arising from exponential model provided that the exponential model were accepted. See Samaniego and Whitaker (1986), Arnold {\em et. al.} (1998), Doostparast (2009), Doostparast and Balakrishnan (2010). If the exponentially was rejected, one can use the results of Hoinkes and Padgett (1994). If the weibull model was rejected, one can use the non-parametric results of Samaniego and Whitaker (1988).

Following Samaniego and Whitaker (1988), one can consider the problem when the available data are arising from $L$ sequence of random variables. More precisely, assume that $L$ independent samples
\[Y_{i1},Y_{i2},\cdots,Y_{i,n_i},\ \ \ 1\leq i\leq L,\]
each of size $n_i$, are obtained sequentially from $F$. The resulting records are $R_{i1},K_{i1},\cdots$, $R_{im_i},K_{im_i}$ for $i=1,2,\cdots,L$ where $K_{i m_i}=n_i-\sum_{j=1}^{m_i-1}K_{ij}$.
Similarly, the NPMLE of the survival function at point $t$ is obtained as
\begin{equation}\label{npmle:record:m:sample}
\hat{\bar{F}}(t)=\prod_{i:r_{(i)}\leq t}\frac{\sum_{j=i}^{m^\star}
k_{(j)}-1}{\sum_{j=i}^{m^\star} k_{(j)}},
\end{equation}
where $m^\star=\sum_{i=1}^L m_i$, $\{r_{(i)},i=1,2,\cdots,m^\star\}$ be the order observed record values in the $L$ samples combined and  $\{k_{(i)},i=1,2,\cdots,m^\star\}$ the induced order statistics for the associated $k_{ij}$.
To carry out the impact of $L$ on the power of the GOF tests, one can conduct a simulation study. 


\end{document}